\documentclass[12pt,centertags,oneside]{amsart}
\usepackage{stmaryrd}
\usepackage{amsmath,amstext,amsthm,amscd,typearea,hyperref}
\usepackage{amssymb}
\usepackage{esint}
\usepackage{a4wide}
\usepackage[mathscr]{eucal}
\usepackage{mathrsfs}
\usepackage{typearea}
\usepackage{charter}
\usepackage{pdfsync}
\usepackage[a4paper,width=16.5cm,top=3cm,bottom=3cm]{geometry}

\numberwithin{equation}{section}

\usepackage[english]{babel}

\allowdisplaybreaks
\tolerance=1
\emergencystretch=\maxdimen
\hyphenpenalty=10000
\hbadness=10000

\usepackage{multicol}

\usepackage{xcolor}

\newtheorem{theorem}{Theorem}[section]

\newtheorem{proposition}[theorem]{Proposition}
\newtheorem{corollary}[theorem]{Corollary}

\newtheorem{remark}[theorem]{Remark}

\newtheorem{example}[theorem]{Example}

\newtheorem*{definition*}{Definition}

\newcommand{\cali}[1]{\mathscr{#1}}

\newcommand{\Per}{{\rm Per}}
\newcommand{\Tan}{{\rm Tan}}

\newcommand{\Cc}{\cali{C}}

\newcommand{\FS}{{\rm FS}}

\newcommand{\C}{\mathbb{C}}

\newcommand{\R}{\mathbb{R}}

\renewcommand\P{\mathbb{P}}
%\newcommand{\K}{{\cal K}}

%------------------------------Boldsymbol-------------------------------------

% \newcommand\bb{\boldsym{b}}

\newcommand{\Gal}{\operatorname{Gal}}

\newcommand{\vphi}{\varphi}

\newcommand{\ovl}{\overline}

\newcommand{\cal}{\mathcal}

\newcommand{\eps}{\varepsilon}

\newcommand{\h}{\widehat{h}}

\newcommand{\bb}{\mathbb}

\usepackage[
backend=biber,
style=alphabetic,
]{biblatex}

\addbibresource{bibfile.bib}

\usepackage{fancyhdr}

\pagestyle{fancy}
\fancyhf{}
\rhead{ \thepage }
\lhead{\textit{Equidistribution of periodic points}}

%%%%%%%%%%%%%%%%%%%%%%%%%%%%%%%%%%%%%%%%%%%%%%%%%%%%%%%%%%%%

\title{Lower Bounds for Galois Orbits of periodic points for polarized endomorphisms}

\author{Tien-Cuong Dinh}
\address{Department of Mathematics,  National University of Singapore - 10, Lower Kent Ridge Road - Singapore 119076}
\email{matdtc@nus.edu.sg}

\author{Jit Wu Yap}
\address{182 Memorial Drive, Cambridge, MA 02139}
\email{jitwuyap@mit.edu}

%\thanks{ ... }

\begin{document}

\begin{abstract}
Let $K$ be a number field, $X$ a smooth projective variety over $K$ and $f: X \to X$ a polarized endomorphism of degree $d \geq 2$. We prove an exponential lower bound on $[K(\Per_n):K]$, where $\Per_n$ is the set of $n$-periodic points, extending results of \cite{Yap24} to higher dimensions. We also prove a quantitative rate of equidistribution for $\Per_n$ to the equilibrium measure.  
\end{abstract}
   
\clearpage\maketitle

%%%%%%%%%%%%%%%%%%%%
\section{Introduction}
Let $K$ be a number field and $X$ a smooth projective variety over $K$. Let $f: X \to X$ be a polarized endomorphism of degree $d \geq 2$. The aim of this short paper is to generalize two results of \cite{Yap24} from the case of surfaces to arbitrary dimension. For any $n \geq 1$, we will let 
$$\Per_n = \{f^n(x) = x\}$$ 
denote the set of periodic points of period $n$. Given $\alpha \in X(K)$, we will let 
$$f^{-n}(\alpha) = \{ x \in X(\ovl{K}) \mid f^n(x) = \alpha\}.$$
For both $\Per_n$ and $f^{-n}(\alpha)$, we will only consider the underlying set and ignore multiplicity. We let $K(\Per_n)$ denote the field extension obtained by adjoining the coordinates of every point in $\Per_n$. 

\begin{theorem} \label{IntroGaloisTheorem1}
There exists $\lambda > 1$ depending on $f$ such that 
$$[K(\Per_n):K] \geq \lambda^n$$
for all sufficiently large $n$. In fact at least $1 - \lambda^{-n}$ proportion of points of $\Per_n$ have Galois orbit $\geq \lambda^n$ in size for all sufficiently large $n$.  
\end{theorem}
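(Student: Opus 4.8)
I would deduce both assertions from a counting estimate for periodic points of small degree, which itself comes from an effective arithmetic equidistribution theorem. Write $k=\dim X$, and let $\mu$ (resp.\ $\mu_v$) be the equilibrium measure of $f$ at the archimedean place (resp.\ on the Berkovich analytification $X_v^{\an}$). Since $[K(\Per_n):K]\ge[K(x):K]=\#(\Gal(\ovl K/K)\!\cdot\!x)$ for every $x\in\Per_n$, the first inequality is a formal consequence of the second; so it suffices to bound the bad set $B_n(\lambda):=\{x\in\Per_n:[K(x):K]<\lambda^n\}$ by $\lambda^{-n}\#\Per_n$ for a suitable $\lambda>1$. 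I would also use at the outset the lower bound $\#\Per_n\ge c\,d^{nk}$ ($c>0$, $n$ large): the Lefschetz fixed point formula gives $\#\{x:f^n(x)=x\text{ with mult.}\}=d^{nk}+o(d^{nk})$, with $H^{2k}(X,\C)$ contributing the main term because $f$ is polarized; moreover $f^n$ has no positive-dimensional fixed locus (such a locus $Z$ would force $L|_Z$ to be $(d^n-1)$-torsion in $\Pic(Z)$, contradicting ampleness), and a standard bound on the local multiplicities lets one pass to the underlying set.

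\textbf{Effective equidistribution.} All points of $\Per_n$ are preperiodic, so $\h_f=0$ on $\Per_n$. The technical heart is an effective form of the Yuan--Chambert-Loir theorem: there are constants $C,\delta>0$ depending only on $(X,f)$ so that, for every $x\in X(\ovl K)$ with Galois orbit $O(x)$, every place $v$ and every test function $\varphi$ on $X_v^{\an}$,
$$\abs{\langle \mu_{O(x),v}-\mu_v,\varphi\rangle}\le C\,\norm{\varphi}_{\ast}\,\bigl(\h_f(x)^{1/2}+\#O(x)^{-\delta}\bigr),$$
where $\mu_{O(x),v}$ is the probability measure equidistributed over the $v$-adic points of $O(x)$ and $\norm{\cdot}_\ast$ is an appropriate norm ($\mathcal{C}^2$ at the archimedean place, Lipschitz for the path metric at finite places). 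I would prove this by running the Yuan--Zhang arithmetic intersection argument while keeping constants explicit; the term $\#O(x)^{-\delta}$ ultimately reflects the normalized logarithmic discriminant of $K(x)$, which stays bounded because $x$ has small height. For $x\in\Per_n$ the height term disappears, so $\mu_{O(x),v}$ is $O(\#O(x)^{-\delta})$-close to $\mu_v$ at all places at once; averaging over the orbits making up $\Per_n$ already gives the quantitative rate of equidistribution for $\Per_n$ stated in the abstract.

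\textbf{Counting small orbits.} It remains to see that the effective estimate forces $\#B_n(\lambda)\le Q(\lambda^n)$ for a fixed polynomial $Q$ (equivalently: the number of preperiodic points of $X(\ovl K)$ of degree $\le D$ is $\le Q(D)$). Decompose $B_n(\lambda)$ into its distinct Galois orbits $O_1,\dots,O_r$, each of size $<\lambda^n$, so $\#B_n(\lambda)<r\lambda^n$, and it suffices to bound $r$ by a polynomial in $\lambda^n$. The $O_j$ are pairwise disjoint orbits of height-$0$ points; summing over all places the local energy inequalities behind the previous step (via the product formula, i.e.\ $\h_f=0$) shows that the Gram matrix of the mean-zero measures $\mu_{O_j}-\mu$, in the natural negative-Sobolev pairing, is positive semidefinite with off-diagonal entries $O(\lambda^{-2\delta n})$ and with diagonal entries bounded below by a fixed positive constant; an elementary linear-algebra argument then caps $r$ polynomially in $\lambda^n$. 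Combining with $\#\Per_n\ge c\,d^{nk}$, we get $\#B_n(\lambda)/\#\Per_n\le\lambda^nQ(\lambda^n)/(c\,d^{nk})\le\lambda^{-n}$ for all large $n$, as soon as $\lambda>1$ is fixed small enough that $\lambda^{\,2+\deg Q}<d^{k}$. This is exactly the proportion statement, and with it the theorem.

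\textbf{Main difficulty.} The hard part is the effective equidistribution estimate with error decaying polynomially in $\#O(x)$ \emph{uniformly in $n$ and simultaneously at every place}. At the finitely many places of bad reduction, $\mu_v$ is a genuinely non-atomic Berkovich measure rather than the Dirac mass at the Gauss point, and the capacity/energy estimates on $X_v^{\an}$ must be made quantitative there; at the archimedean place the estimate must be uniform in the (possibly singular) potentials of $\mu$. Converting the package of local estimates into the polynomial bound on the number of small orbits — an effective Bogomolov-type input — is the second delicate point.
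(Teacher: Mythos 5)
Your central estimate is the claimed effective equidistribution bound
$$\bigl|\langle \mu_{O(x),v}-\mu_v,\varphi\rangle\bigr|\le C\,\|\varphi\|_{\ast}\bigl(\h_f(x)^{1/2}+\#O(x)^{-\delta}\bigr),$$
with no exceptional set, and this is false in dimension $\ge 2$ — it is not merely hard to prove. Take $f([x:y:z])=[x^d:y^d:z^d]$ on $\P^2$ over $\Q$ and $x_n=[\zeta:1:0]$ with $\zeta$ a primitive $(d^n-1)$-th root of unity. Then $\h_f(x_n)=0$, $\#O(x_n)=\phi(d^n-1)\to\infty$, yet the whole Galois orbit lies on the totally invariant line $\{z=0\}$, which carries no mass of $\mu_f$ (the Haar measure on the unit torus); a bump function $\varphi$ concentrated near the unit circle of that line gives $\langle\mu_{O(x_n)},\varphi\rangle=1$ while $\langle\mu_f,\varphi\rangle=0$. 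This is exactly the higher-dimensional obstruction the paper is built around: small points can cluster on proper (invariant) subvarieties, so any quantitative equidistribution statement must carry an exceptional set. Consequently your ``counting small orbits'' step also collapses, since the Gram-matrix off-diagonal bound $O(\lambda^{-2\delta n})$ is derived from the false inequality, and the asserted polynomial bound on the number of preperiodic points of degree $\le D$ (an effective Bogomolov statement in arbitrary dimension) is left unsupported. A smaller but real gap: passing from the Lefschetz count with multiplicities to $\#\Per_n\ge c\,d^{kn}$ for the underlying set is not a ``standard bound on local multiplicities''; controlling multiplicities of $f^n$ uniformly in $n$ is a genuine theorem, which the paper simply cites (\cite{DZ23}).

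For comparison, the paper's route accepts the presence of an exceptional set: \cite[Theorem 8.1]{Yap24} gives, for each smooth $\varphi$ and each $\eps>0$, equidistribution with error $\|\varphi\|_{\Cc^3}(\h_f(x)/\eps+c_3\eps)$ valid off a hypersurface $H(\varphi,\eps)$ of degree $\le c_1\eps^{-c_2}$. One then builds $\lambda^n$ pairwise disjoint small cubes of controlled $\mu_{f,v}$-mass and bump functions $\varphi_i$, so that any periodic point avoiding all $H(\varphi_i,\eps)$ has an orbit meeting every cube, hence orbit size $\ge\lambda^n$. The new geometric ingredient, proved via density of positive closed currents, is the uniform bound $|\Per_n\cap Z|\le c\,e\,d^{qn}$ for $Z$ of degree $e$ and dimension $q$; applied to the union of the exceptional hypersurfaces (total degree polynomial in $\lambda^{n}$), it shows only a $\lambda^{-n}$ proportion of $\Per_n$ can lie on them, since $|\Per_n|\ge \alpha d^{kn}$. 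If you want to salvage your approach, you must either insert such an exceptional-set mechanism (and then you need precisely a bound of the above type for periodic points on subvarieties), or restrict to settings where no proper invariant subvariety interferes, which is not the general case treated here.
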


When $X = \bb{P}^1$, this was proven by Baker in \cite{Bak06} for number fields and in \cite{Bak09} for function fields. When $X = A$ is an abelian variety, the number field case can be obtained from results of Masser \cite{Mas84} and the function field case from Looper \cite{Loo24}. The case of number fields when $X$ is a surface was proven by the second author \cite{Yap24} via a quantitative equidistribution theorem for Galois orbits of small points in higher dimensions. Although the quantitative equidistribution theorem applies to all dimensions, some understanding of the geometry of periodic points is required to obtain Theorem \ref{IntroGaloisTheorem1} in higher dimensions, which is the main content of this paper. 
\par 
We also have a similar result for $f^{-n}(\alpha)$ although one has to restrict where $\alpha$ lies in. As in Theorem \ref{IntroGaloisTheorem1}, we will let $K(f^{-n}(a))$ denote the field extension of $K$ obtained by adjoining the coordinates of every point in $f^{-n}(a)$.

\begin{theorem} \label{IntroGaloisTheorem2}
Let $\cal{E}$ denote the maximal totally invariant proper subvariety of $X$ under $f$. There exists $\lambda > 1$, depending only on $f$, such that if $a \not \in \cal{E}$, then
$$[K(f^{-n}(a)):K] \geq \lambda^n$$
for all sufficiently large $n$ depending on $a$. In fact at least $1 - \lambda^{-n}$ proportion of points of $f^{-n}(a)$ have Galois orbits $\geq \lambda^n$ in size for all sufficiently large $n$ depending on $a$.
\end{theorem}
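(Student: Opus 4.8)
The plan is to reduce the statement about preimages $f^{-n}(a)$ to the same quantitative equidistribution input used for periodic points, combined with a geometric analysis of how $f^{-n}(a)$ distributes. First, recall that for a polarized endomorphism the measure $\mu_f$ (the equilibrium measure, which is the normalized self-intersection of the Green current of a polarization) satisfies $d^{-n} (f^n)^* \delta_a \to \mu_f$ weakly whenever $a$ is not contained in the exceptional set $\mathcal{E}$; this is the arithmetic/complex-dynamical equidistribution theorem of Dinh--Sibony at each archimedean place, and its $v$-adic analogues. The key point is to make this quantitative: one needs an effective rate, at every place $v$, for the convergence of $d^{-n}(f^n)^*\delta_a$ to $\mu_f$, of the form (test function paired against the difference) $\leq C \rho^{n}$ with $\rho < 1$, for test functions in a suitable class, uniformly enough in $v$. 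This is exactly the type of estimate that the quantitative equidistribution theorem from \cite{Yap24} provides (it is stated for small points but the mechanism — comparison of heights via the arithmetic Hodge index inequality — applies here since each point of $f^{-n}(a)$ has canonical height controlled by $h_f(a)/d^n$, hence tends to $0$).

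The second ingredient is the arithmetic equidistribution of Galois orbits: if $(x_m)$ is a sequence of points with $\hat h_f(x_m) \to 0$ and $\deg(x_m) \to \infty$, then the Galois orbit of $x_m$ equidistributes towards $\mu_f$ with a rate governed by the height and by $\deg(x_m)^{-1}$. Now suppose, for contradiction, that $[K(f^{-n}(a)):K] \leq \lambda^n$ for a set of $n$ of positive upper density (or infinitely many $n$), with $\lambda$ to be chosen. Then the union of Galois orbits of points in $f^{-n}(a)$ has controlled size, and each point has canonical height $\leq h_f(a) d^{-n}$. Averaging the Galois-orbit equidistribution over the $\Gal(\ovl K/K)$-orbits composing $f^{-n}(a)$, one obtains that the empirical measure $d^{-n}(f^n)^*\delta_a = d^{-n}\sum_{x \in f^{-n}(a)} \delta_x$ converges to $\mu_f$ \emph{slowly} — with an error bounded below in terms of $\lambda^{-n}$ and the height decay $d^{-n}$ — while the dynamical equidistribution above forces convergence at the \emph{fixed geometric rate} $\rho^n$. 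Choosing $\lambda$ with $1 < \lambda$ small enough that $\lambda \cdot (\text{height factor}) $ cannot beat $\rho$ produces the desired contradiction, giving $[K(f^{-n}(a)):K] \geq \lambda^n$ for all large $n$. The refined statement — that a proportion $\geq 1 - \lambda^{-n}$ of points have large Galois orbit — follows by the same argument applied after discarding the (few) points of small degree: if more than $\lambda^{-n}$ of the $d^n$ preimages had Galois orbit $\leq \lambda^n$, these would already contribute $\geq d^{-n}\lambda^{-n}\cdot(\text{something})$... more carefully, one runs the contradiction with the sub-sum over low-degree points, whose total mass is at least $\lambda^{-n}$, and whose equidistribution defect is again forced both ways.

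The main obstacle — and the part requiring genuine work beyond citing \cite{Yap24} — is the \textbf{$v$-adic and archimedean uniformity of the geometric rate $\rho^n$}, together with handling the fact that $a \notin \mathcal{E}$ only guarantees equidistribution \emph{in the limit}, not a uniform rate independent of $a$: as $a$ approaches $\mathcal{E}$, the "large $n$" threshold and the implied constants degrade. One must show that although the threshold $n_0(a)$ depends on $a$, the exponential base $\lambda$ does not — this is why the theorem is phrased with "$n$ sufficiently large depending on $a$" but "$\lambda$ depending only on $f$." Concretely, one needs a bound of the form $(\text{equidistribution defect of } d^{-n}(f^n)^*\delta_a ) \leq C(a)\rho^n$ with $\rho$ \emph{independent of} $a$; this follows from the contraction properties of $f^*$ on the relevant space of DSH functions / currents, normal families arguments near $\mathcal{E}$, and the attracting nature of $\mathcal{E}$ for the reverse dynamics, but assembling it uniformly across all places is the technical heart. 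A secondary obstacle is ensuring the points of $f^{-n}(a)$ are genuinely "small" and that the arithmetic Hodge index / equidistribution machinery applies with effective constants in arbitrary dimension — but this is precisely what the cited quantitative equidistribution theorem is designed to deliver, so it can be invoked as a black box.
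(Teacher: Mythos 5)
Your proposal misses the actual content of this theorem and replaces it with a contradiction scheme that does not work. The heart of the paper's proof is a \emph{lower bound on the number of distinct points} in $f^{-n}(a)$: since preimages are counted as a set, high multiplicities could a priori make $|f^{-n}(a)|$ far smaller than $d^{kn}$, and then no proportion statement (nor any comparison with points trapped on exceptional hypersurfaces) can get started. The paper handles this with Dinh's theory of analytic multiplicative cocycles \cite{Din09}: the asymptotic multiplicity function $\kappa_-$ has level sets $E_{\lambda_1}=\{\kappa_-\geq\lambda_1\}$ which are proper invariant analytic subsets, and by maximality of the totally invariant set $\cal{E}$ one can replace $a\notin\cal{E}$ by some $b\in f^{-m}(a)$ with $b\notin E_{\lambda_1}$, giving $|f^{-n}(b)|\geq d^{kn}\lambda_1^{-n}$ for large $n$. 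This is exactly where the hypothesis $a\notin\cal{E}$ and the phrase ``$n$ sufficiently large depending on $a$'' enter; your proposal never engages with multiplicities, and instead locates the role of $\cal{E}$ in a conjectural uniform geometric rate $\rho^n$ for $d^{-n}(f^n)^*\delta_a\to\mu_f$ at every place, which the paper neither proves nor needs.

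The contradiction scheme itself is also flawed: assuming all Galois orbits in $f^{-n}(a)$ are small gives no lower bound on the equidistribution defect of the full preimage measure, because a union of many tiny Galois orbits can still equidistribute perfectly well, so there is no tension between your two equidistribution statements to exploit. The mechanism that actually forces large orbits (here and in Theorem \ref{IntroGaloisTheorem1}) is combinatorial: one takes $D=\lambda^n$ bump functions $\vphi_i$ supported on pairwise disjoint cubes of positive $\mu_{f,v}$-mass (Proposition \ref{Measure1}), applies the quantitative Galois-orbit equidistribution of Theorem \ref{QuantEquidistribution1} to each $\vphi_i$ (the canonical heights $\h_f(a)d^{-n-m}$ are small enough), and concludes that any point avoiding all the exceptional hypersurfaces $H(\vphi_i,\eps)$ has Galois orbit meeting every one of the $D$ disjoint cubes, hence of size at least $\lambda^n$. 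Finally, the points lying on the union of these hypersurfaces are counted by the intersection bound $|f^{-n}(a)\cap Z|\leq c\,e\,d^{(k-1)n}$ of Proposition \ref{IntersectionBound1} (proved via density of currents), and this count is negligible against $d^{kn}\lambda_1^{-n}$. None of these three ingredients (multiplicity control, disjoint-cube forcing, hypersurface intersection count) appears in your outline, so the proposal as written has genuine gaps at every stage where the dimension-$k$ difficulties actually live.
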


In \cite{Yap24}, a quantitative rate of equidistribution for periodic points was established for polarized endomorphisms $f: X \to X$ defined over a number field $K$ when $X$ is a smooth surface. We similarly generalize this to smooth varieties of arbitrary dimension.

\begin{theorem} \label{IntroQuantitative1}
Let $X$ be a smooth projective variety over a number field $K$ and
$f : X \to X$ a polarized endomorphism of degree $d\geq 2$. There exist constants
$C > 0$ and $\lambda > 1$, depending on $f$, such that if $v$ is an archimedean place of $K$,
then for any smooth function $\vphi : X(\C_v) \to \R$, we have
$$\Big| {1\over |\Per_n|} \sum_{x\in \Per_n} \vphi(x) - \int \vphi d\mu_{f,v}\Big| \leq C \|\vphi\|_{\Cc^3} \lambda^{-n}$$
where $\mu_{f,v}$ is the equilibrium measure of $f$ at the place $v$. 
\end{theorem}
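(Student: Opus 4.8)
The plan is to deduce the quantitative equidistribution of $\Per_n$ from a quantitative equidistribution statement for Galois orbits of small points, exactly as in the surface case of \cite{Yap24}, combined with the geometric input on periodic points developed in this paper. The key observation is that each point $x \in \Per_n$ is preperiodic, hence has canonical height $\h_f(x) = 0$, so $\Per_n$ is a set of points of height zero. One wants to view $\Per_n$, or rather the union of the Galois orbits of its points, as a single ``small'' adelic sequence and apply an effective arithmetic equidistribution theorem (an Yuan-type / Chambert-Loir-type statement with an explicit error term in terms of the height and the discriminant-like quantities). The archimedean place $v$ contribution then produces the integral against $\mu_{f,v}$. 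So first I would recall the effective equidistribution theorem from \cite{Yap24} (valid in all dimensions) and identify precisely what its error term depends on: typically something like the $\Cc^3$-norm of the test function $\vphi$, the number of points $N = |\Per_n|$ counted appropriately, the maximal size of a Galois orbit, and the total height of the configuration — which is $0$ here.

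Second, I would supply the combinatorial/geometric bookkeeping. Since $|\Per_n|$ grows like $d^{n\dim X}$ (up to bounded multiplicative error, by the holomorphic Lefschetz fixed point formula applied to $f^n$, which is where one uses $f$ polarized so the eigenvalues of $f^*$ on cohomology are controlled), and since the lower bound on Galois orbits from Theorem~\ref{IntroGaloisTheorem1} says at least a $1-\lambda^{-n}$ proportion of points have orbit size $\geq \lambda^n$, one can split the sum over $\Per_n$ into a ``good'' part, consisting of complete Galois orbits of large size, and a ``bad'' part of proportion $\leq \lambda^{-n}$. The bad part contributes at most $2\|\vphi\|_{\infty}\lambda^{-n} \leq 2\|\vphi\|_{\Cc^3}\lambda^{-n}$ trivially. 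For the good part, one groups points into full $\Gal(\ovl K/K)$-orbits; each orbit is a sequence of small points to which the effective equidistribution theorem applies, yielding an error decaying in the orbit size, hence at worst like a negative power of $\lambda^n$; summing over the (at most $d^{n\dim X}$) orbits and renormalizing by $|\Per_n|$ keeps the bound of the form $C\|\vphi\|_{\Cc^3}\lambda^{-n}$ after shrinking $\lambda$ slightly.

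Third, one has to make sure the equilibrium measure $\mu_{f,v}$ is genuinely the common limit: this is standard, since $\mu_{f,v}$ is the unique $f$-invariant measure of maximal entropy / the Monge-Amp\`ere measure of the canonical metric, and it is known (Briend-Duval, Dinh-Sibony) that periodic points of period $n$ equidistribute toward $\mu_{f,v}$; here one only needs the identification of the limit, the \emph{rate} coming from the arithmetic argument above rather than the complex-dynamical one. The role of this paper's geometric results is precisely to guarantee the Galois orbit lower bound enters uniformly, i.e.\ that one cannot have too many periodic points concentrated on a proper subvariety where the Galois orbits could a priori be small — this is what Theorem~\ref{IntroGaloisTheorem1} and its proof control.

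The main obstacle, I expect, is not the arithmetic equidistribution machinery (which is imported) but the uniformity in the geometric step: one must control the proportion of periodic points of period $n$ that lie on, or near, the exceptional locus / lower-dimensional invariant subvarieties, and ensure the count $|\Per_n| \sim d^{n\dim X}$ is robust enough that these contribute a negligible, exponentially small proportion. Concretely the delicate point is showing that the ``bad'' set in Theorem~\ref{IntroGaloisTheorem1} — points with small Galois orbit — is not only small in proportion but that its contribution to $\sum_{x \in \Per_n}\vphi(x)$ can be absorbed into $C\|\vphi\|_{\Cc^3}\lambda^{-n}$ uniformly over all smooth $\vphi$; since $\|\vphi\|_{\infty} \leq \|\vphi\|_{\Cc^3}$ this reduces to the proportion bound, so ultimately everything rests on the exponential-decay proportion statement already asserted in Theorem~\ref{IntroGaloisTheorem1}, and the proof here is mostly a matter of assembling these pieces with consistent choices of $\lambda$ and $C$.
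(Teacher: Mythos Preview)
Your proposal misidentifies the shape of the effective equidistribution theorem from \cite{Yap24} and, as a consequence, inverts the logical dependence between Theorems~\ref{IntroGaloisTheorem1} and~\ref{IntroQuantitative1}.

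The theorem actually available (Theorem~\ref{QuantEquidistribution1}) does \emph{not} give an error term decaying in the size of the Galois orbit. For a point $x$ with $\h_f(x)=0$ it says: there is a hypersurface $H(\vphi,\eps)$ of degree $\leq c_1\eps^{-c_2}$ such that \emph{if} $x\notin H(\vphi,\eps)$ then the discrepancy of the Galois orbit $F_x$ against $\mu_{f,v}$ is at most $c_3\|\vphi\|_{\Cc^3}\eps$. The orbit size $|F_x|$ never appears in the bound. So your step ``each orbit \dots\ yielding an error decaying in the orbit size, hence at worst like a negative power of $\lambda^n$'' has no content here; knowing $|F_x|\geq\lambda^n$ from Theorem~\ref{IntroGaloisTheorem1} buys you nothing at this point.

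What the paper actually does is apply Theorem~\ref{QuantEquidistribution1} directly to the given test function $\vphi$ with $\eps=\lambda^{-n}$, producing the single bad hypersurface $H(\vphi,\lambda^{-n})$ of degree $O(\lambda^{c_2 n})$. The geometric input of this paper, Proposition~\ref{IntersectionBound1}, then bounds $|\Per_n\cap H(\vphi,\lambda^{-n})|\leq c\,\lambda^{c_2 n}d^{(k-1)n}$; choosing $\lambda$ so that $\lambda^{c_2+1}<d$ makes this an exponentially small fraction of $|\Per_n|\geq\alpha d^{kn}$. The good set $G$ of periodic points off $H$ is Galois-stable and each of its orbits satisfies the $c_3\|\vphi\|_{\Cc^3}\lambda^{-n}$ bound, while the bad set contributes $O(\|\vphi\|_{\Cc^3}\lambda^{-n})$ trivially. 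Thus the ``bad'' set is defined by the hypersurface, not by orbit size, and the relevant geometric ingredient is Proposition~\ref{IntersectionBound1}, not Theorem~\ref{IntroGaloisTheorem1}. In the paper the latter is proved afterwards, by a separate (and more elaborate) argument; using it as an input to Theorem~\ref{IntroQuantitative1} would at best be unnecessary and, with the equidistribution theorem as stated, would not close the argument.
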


We note that in the case of $X = \bb{P}^1$ and $f$ is defined over a number field $K$, this was already known by Favre--Rivera-Letelier \cite{FRL06} with the exponent $\lambda = d^{1/2 - \delta}$ for any $\delta > 0$, which is likely to be optimal for $C^1$-functions. Recently, Gauthier and Vigny \cite{GV25} were able to extend Favre--Rivera-Letelier's results to the setting of $f$ being defined over $\bb{C}$, with the exponent $\lambda = d^{1/2 - \delta}$ by using Moriwaki heights. For preimages $f^{-n}(a)$ and when $f$ is defined over $\bb{C}$, Drasin and Okuyama \cite{DO07} has obtained near-optimal constants by using Nevalinna theory. Okuyama \cite{Oku12} also has analogous results in the non-archimedean setting. For non-explicit $\lambda$, the general version for endomorphisms $f$ on $\bb{P}^k$ was obtained recently by de-Thelin--Dinh--Kaufmann \cite{dTDK25}. 
\par 
For preimages $f^{-n}(a)$, we can obtain a similar result but as it is weaker than \cite[Theorem 1.1]{DS10b}, we do not include it here. 

\subsection{Acknowledgements} The second author would like to thank Laura DeMarco, Thomas Gauthier and Gabriel Vigny for helpful discussions. The first author has received funding from the National University of
Singapore and MOE of Singapore through the grants A-8002488-00-00 and A-8003576-00-00.

%%%%%%%%%%%%%%%%%%%%
\section{Density of positive closed currents}

In this section, we recall some basic facts on the theory of density of positive closed currents and refer to \cite{DS18} for details.
The theory 
will allow us to bypass Lefschetz's fixed points formula in order to get an upper bound for the number of periodic points or of a negative orbit in a subvariety. 

Let $X$ be a compact K\"ahler manifold of dimension $k$. 
Let $V$ be an irreducible  submanifold of $X$ of  dimension $l$.  We denote by 
$\pi:\ E\to V$ the normal vector bundle  of $V$ in $X$.
For a point $a\in V,$ we denote by $\Tan_a X$ and $\Tan_a V$ the tangent spaces of $X$ and of $V$ at $a$. We also identify 
the fiber $E_a:=\pi^{-1}(a)$ of $E$ over $a$ with the quotient space 
$\Tan_a X / \Tan_a V$ and we identity the zero section of $E$ with $V$. 

Consider the natural compactification $\overline E$ of $E$ that we identify with the projectivisation $\P(E\oplus \C)$ of the vector bundle $E\oplus \C$, where $\C$ is the trivial line bundle over $V$. The above projection $\pi$ is naturally identified to the projection from $\overline E$ to $V$. 
Let $A_\lambda$ be the multiplication by $\lambda$ on the fibers of  $E$ where $\lambda\in\C^*,$
i.e. $A_\lambda(u):=\lambda u,$  $u\in E_a,$ $a\in V.$ This map extends to a holomorphic automorphism of $\overline E$. 

Fix an open subset $V_0$ of $V$ which is naturally  identified with an open subset of the section 0 in $E$. A diffeomorphism $\tau$ from a neighbourhood of $V_0$ in $X$ to a neighbourhood of $V_0$ in $E$ is called 
{\it admissible} if it satisfies the following three conditions:
the restriction of $\tau$ to $V_0$ is the  identity,  the differential  of $\tau$ at each point  $a\in V_0$  is
$\C$-linear and the composition  of 
$$E_a\hookrightarrow \Tan_a(E)\to \Tan_a(X)\to E_a$$    
is the identity. Here, the morphism $\Tan_a(E)\to \Tan_a(X)$ is induced by the differential of $\tau^{-1}$ at $a$ and the other maps are the canonical ones. 
Using exponential maps induced by K\"ahler metrics on $X$, one can construct admissible maps for $V_0=V$. Admissible maps for $V_0=V$ are rarely holomorphic. Here is a main theorem of the density theory for currents.

\begin{theorem}[see {\cite[Section 4]{DS18}}] \label{t:density}
With the above notation, consider an admissible map $\tau$ with $V_0=V$. Let $T$ be a positive closed $(p,p)$-current on $X$ and define $T_\lambda:=(A_\lambda)_*\tau_*(T)$ for $\lambda\in\C^*$. Then the following holds.
\begin{enumerate}
\item[(a)] For any compact set $K\subset E$, when $|\lambda|$ is large enough, the mass of $T_\lambda$ on $K$ is bounded by a constant independent of $\lambda$.
\item[(b)] Let $(\lambda_n)_{n\geq 0}$ be a sequence going to infinity such that $T_{\lambda_n}$ converge to some current $S$ in $E$. Then $S$ can be extended by $0$ to a current on $\overline E$ which is a positive closed $(p,p)$-current that we still denote by $S$.
\item[(c)] The current $S$ may depend on $(\lambda_n)_{n\geq 0}$ but it is independent of the choice of $\tau$. Moreover, its mass is bounded by $c\|T\|$ where $c>0$ is a constant independent of $T$.
\end{enumerate}
\end{theorem}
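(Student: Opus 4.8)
The plan is to reduce everything to part (a), the uniform mass bound, from which (b) and (c) follow by soft arguments. Since $V$ is a compact submanifold of $X$, I would fix a finite atlas of adapted charts $U_i\cong\mathbb D^l\times\mathbb D^{k-l}$ with $V\cap U_i=\mathbb D^l\times\{0\}$, in which — after composing with the admissible $\tau$ — the automorphism $A_\lambda$ becomes the explicit anisotropic dilation $A_\lambda(z,w)=(z,\lambda w)$ on the corresponding chart $\mathbb D^l\times\P^{k-l}$ of $\overline E$. All three assertions are local along $V$, so it is enough to work in one such chart; I may also assume $\tau$ is defined on a bounded tubular neighbourhood $N$ of $V$ in $X$ and sends it into $\{\|\cdot\|_E<R\}$.

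For (a), fix a compact $K\subset E$ and a smooth $(k-p,k-p)$-test form $\phi$ supported near $K$ with bounded $C^0$-norm. Since $A_\lambda$ and $\tau$ are diffeomorphisms and $T\ge 0$, I would write $\langle T_\lambda,\phi\rangle=\langle T,\tau^*A_\lambda^*\phi\rangle$ and estimate the right-hand side on $X$. Two features make this work: $\tau(N)$ being bounded, the form $\tau^*A_\lambda^*\phi$ is supported in a tube $\{\dist(\cdot,V)<CR/|\lambda|\}$ around $V$ in $X$; and $A_\lambda$ dilates only the normal directions, so decomposing $\phi$ by the number $b$ of factors $dw_j,d\bar w_j$ it contains (its normal bidegree), the piece of normal bidegree $(b,b)$ acquires a factor $|\lambda|^{2b}$ in $A_\lambda^*\phi$, while $\tau^*$ — whose differential is $\C$-linear along $V$ — only mixes the normal bidegree by terms carrying extra powers of $1/|\lambda|$ on so thin a tube. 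Collecting terms,
$$|\langle T_\lambda,\phi\rangle|\ \le\ C\sum_{b=0}^{k-p}|\lambda|^{2b}\,M_b(CR/|\lambda|),$$
where $M_b(\rho)$ is the supremum of $|\langle T,\psi\rangle|$ over uniformly bounded $(k-p,k-p)$-forms $\psi$ supported in the $\rho$-tube and carrying at least $b$ normal directions. Everything then reduces to the estimate $M_b(\rho)\le C'\rho^{2b}\|T\|$, which cancels the factor $|\lambda|^{2b}$ and gives a bound independent of $\lambda$ once $|\lambda|$ is large enough for the tube to lie in $N$.

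The estimate $M_b(\rho)\le C'\rho^{2b}\|T\|$ is the step I expect to be the main obstacle. Working in an adapted chart, a pointwise comass comparison of $\psi$ with the standard forms together with wedging $T$ by suitable powers of $dd^c|z|^2$ and $dd^c|w|^2$ reduces it to: for a positive closed current $R$ of bidimension $b$, $\int_{\{|w|<\rho\}}R\wedge(dd^c|w|^2)^b\le C'\rho^{2b}\|R\|$. Pushing $R$ forward under the projection $(z,w)\mapsto w$ and using the monotonicity of the Lelong–Jensen averages $\rho\mapsto\rho^{-2b}\int_{\{|w|<\rho\}}(\,\cdot\,)\wedge(dd^c|w|^2)^b$ for positive closed currents yields exactly this. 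Note that the non-holomorphy of $\tau$ never enters at this stage, since I test $T$ against forms on $X$ with its honest complex structure near $V$; $\tau$ only produces the harmless lower-order-in-$1/|\lambda|$ mixing above. Granting (a), part (b) is almost formal: each $T_{\lambda_n}$ is closed, being the pushforward of the closed current $T$ under the diffeomorphism $A_{\lambda_n}\circ\tau$, hence $S$ is closed; and since $\tau(N)$ is bounded in $E$, every $T_\lambda$ is supported in a compact subset of $\overline E\setminus\P(E)$, so $S$ has no mass near $\P(E)$, its trivial extension $\overline S$ to $\overline E$ carries no current on $\P(E)$, and $\overline S$ is closed.

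The only substantive point in (b) is positivity of $\overline S$: $T_\lambda$ itself need not be positive because $\tau$ is not holomorphic, but on the shrinking tube $\{\dist(\cdot,V)<R/|\lambda|\}$ the map $A_\lambda\circ\tau$ converges in $C^1$ to a holomorphic map — admissibility makes the non-$\C$-linear part of $\tau$ of size $O(\dist(\cdot,V)^2)$, and $A_\lambda$ amplifies it only by the factor $|\lambda|$, giving an error $O(R^2/|\lambda|)\to0$ — so on each fixed compact $T_\lambda\ge-\varepsilon_\lambda\,\hat\omega^{k-p}$ with $\varepsilon_\lambda\to0$, forcing $S\ge0$. Finally, for (c): the constant in (a) is manifestly linear in $\|T\|$, so lower semicontinuity of mass and the compactness of $\overline E$ (finitely many adapted charts) give $\|\overline S\|\le c\|T\|$ with $c$ independent of $T$; and for independence of $\tau$, given two admissible maps $\tau,\tau'$ I would set $g_\lambda:=A_\lambda\circ(\tau'\circ\tau^{-1})\circ A_{1/\lambda}$, note that the admissibility conditions force $\tau'\circ\tau^{-1}$ to agree with the identity to first order along $V$ so that $g_\lambda\to\id$ in $C^1$ on compact subsets of $E$, and then use $T'_\lambda=(g_\lambda)_*T_\lambda$ together with the uniform mass bound from (a) and the uniform convergence $g_\lambda^*\phi\to\phi$ to get $\langle T'_\lambda-T_\lambda,\phi\rangle\to0$ for every test form; hence $(T_\lambda)$ and $(T'_\lambda)$ have identical cluster values.
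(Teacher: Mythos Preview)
The paper does not prove this theorem at all: it is stated with the attribution ``see \cite[Section 4]{DS18}'' and is used as a black box (the only in-paper application is Corollary~\ref{c:intersection}). So there is nothing to compare against; your write-up is not a reconstruction of the paper's argument but a sketch of the original Dinh--Sibony density theory.

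That said, your outline is broadly faithful to how the result is actually proved in \cite{DS18}. The reduction of everything to the uniform mass bound (a), the bidegree decomposition in the normal variables with the factor $|\lambda|^{2b}$, and the cancellation via a Lelong-type estimate on tubes are the right skeleton; likewise the independence-of-$\tau$ argument via $g_\lambda:=A_\lambda\circ(\tau'\circ\tau^{-1})\circ A_{1/\lambda}\to\id$ in $C^1$ is the standard one, and your computation of why the off-diagonal part of $d(\tau'\circ\tau^{-1})$ is killed by the rescaling is correct. Two places deserve more care if you ever write this out in full. First, the sentence ``admissibility makes the non-$\C$-linear part of $\tau$ of size $O(\dist(\cdot,V)^2)$'' is not quite the right formulation: what admissibility gives is that $\bar\partial\tau$ vanishes along $V$, hence $\bar\partial\tau=O(\dist(\cdot,V))$, and it is this (not a second-order statement about $\tau$ itself) that makes the anti-holomorphic contributions $O(1/|\lambda|)$ after rescaling. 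Second, your tube estimate $M_b(\rho)\le C'\rho^{2b}\|T\|$ via ``pushing $R$ forward under $(z,w)\mapsto w$'' is too quick as stated: the projection is not proper on $\mathbb D^l\times\mathbb D^{k-l}$, and the slicing/monotonicity argument in \cite{DS18} is set up more carefully (it is the genuine technical core of the theorem). None of this is a fatal gap in a sketch, but these are exactly the points where the honest work lives.
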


\begin{remark} \rm
The last property holds for all admissible maps even when $V_0$ is not equal to $V$. More precisely, if $\tau'$ is such a map, we have that $(A_{\lambda_n})_*\tau'_*(T)$ converges to $S$ in $\pi^{-1}(V_0)$. 
\end{remark}

\begin{example}[good local model] \label{e:local} \rm
When $V_0$ is small enough, there are local holomorphic coordinates on a small neighbourhood $U$ of $V_0$ in $X$ so that over $V_0$ we identify naturally $E$ with $V_0\times \C^{k-l}$ and $U$ with an open neighbourhood of $V_0\times\{0\}$  in $V_0\times \C^{k-l}$. 
In this setting, the identity is a holomorphic admissible map. With a suitable choice of local coordinates $z=(z',z'')\in \C^l\times \C^{k-l}$, we have that $V_0$ is given by the equation $z''=0$ and 
the map $A_\lambda$ is given by $z\mapsto (z',\lambda z'')$.
\end{example}

Recall that if $Z$ is a subvariety of dimension $q$ in a compact K\"ahler manifold $(Y,\omega_Y)$, then the degree of $Z$ is defined by 
$$\deg(Z):=\int_Z \omega_Y^q.$$
By Wirtinger's equality, $\deg Z$ is equal to $q!$ times the $2q$-dimensional volume of $Z$. The following holds for subvarieties of any dimensions. Note that in the algebraic setting, we can also obtain this result using a suitable moving lemma.

\begin{corollary} \label{c:intersection}
Let $Y$ be a compact K\"ahler manifold endowed with a K\"ahler form $\omega_Y$. Let $Z_1$ and $Z_2$ be two subvarieties of $X$. Then the number of isolated points in $Z_1\cap Z_2$ is bounded by $c\deg(Z_1)\deg(Z_2)$ for some constant $c>0$ independent of $Z_1$ and $Z_2$.
\end{corollary}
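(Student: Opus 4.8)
The plan is to reduce to intersection with the diagonal, where the submanifold appearing in Theorem \ref{t:density} can be taken smooth even though $Z_1,Z_2$ may be singular. Put $k:=\dim Y$, equip $Y\times Y$ with the K\"ahler form $\omega:=\pi_1^*\omega_Y+\pi_2^*\omega_Y$, and let $\Delta\subset Y\times Y$ be the diagonal, a smooth submanifold isomorphic to $Y$ whose normal bundle $E$ is canonically identified with $TY$. The map $x\mapsto(x,x)$ is a homeomorphism from $Z_1\cap Z_2$ onto $(Z_1\times Z_2)\cap\Delta$, so the isolated points of $Z_1\cap Z_2$ correspond bijectively to the isolated points $\tilde a_1,\dots,\tilde a_N$ of $(Z_1\times Z_2)\cap\Delta$, and it suffices to bound $N$. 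Writing $q_i:=\dim Z_i$, the dimension theorem for analytic intersections shows that $(Z_1\times Z_2)\cap\Delta$ has no isolated points unless $q_1+q_2\leq k$, so we may assume $q_1+q_2\leq k$.

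Let $T:=[Z_1\times Z_2]$ be the positive closed current of integration, of bidimension $q_1+q_2$. By Fubini's theorem $\|T\|=\int_{Z_1\times Z_2}\omega^{q_1+q_2}=\binom{q_1+q_2}{q_1}\deg(Z_1)\deg(Z_2)\leq 2^{2k}\deg(Z_1)\deg(Z_2)$. I then apply Theorem \ref{t:density} with ambient manifold $Y\times Y$, submanifold $V=\Delta$ and current $T$: taking $\lambda_n\to\infty$ so that $T_{\lambda_n}=(A_{\lambda_n})_*\tau_*T$ converges to some $S$, parts (b) and (c) give that $S$ is a positive closed current of bidimension $q_1+q_2$ on $\overline E=\P(E\oplus\C)$ with $\|S\|\leq c\|T\|$, the constant $c$ depending only on $Y$ and $\omega_Y$ (and on $q_1+q_2$, of which finitely many values occur, so we take $c$ uniform; all masses below are with respect to a fixed K\"ahler form on $\overline E$).

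The crux is a lower bound for the mass of $S$ near each $\tilde a_i$. Since the fibers $\pi^{-1}(\tilde a_1),\dots,\pi^{-1}(\tilde a_N)$ are compact and pairwise disjoint in $\overline E$, I fix pairwise disjoint open neighbourhoods $W_1,\dots,W_N$ of them, and claim $\|S\|_{W_i}\geq c_0$ for some $c_0>0$ depending only on $Y,\omega_Y$. Indeed $A_\lambda$ acts fibrewise on $\overline E$, so the behaviour of $T_\lambda$ near $\pi^{-1}(\tilde a_i)$ is governed by the germ of $Z_1\times Z_2$ at $\tilde a_i$; since $\tilde a_i$ is isolated in $(Z_1\times Z_2)\cap\Delta$, working in the holomorphic good model of Example \ref{e:local} near $\tilde a_i$ (legitimate by the remark following Theorem \ref{t:density}: over a small $V_0$ the limit $S$ is computed by any admissible map, here the holomorphic one) the projection of $Z_1\times Z_2$ to the normal coordinates is finite onto its image with the origin the only point over $0$, and as $\lambda\to\infty$ the currents $(A_\lambda)_*\tau_*T$ concentrate over the fiber $\pi^{-1}(\tilde a_i)\cong\P^k$ a nonzero effective current whose mass is, up to a fixed positive factor from the reference form, the local intersection multiplicity of $Z_1$ and $Z_2$ at the corresponding point, hence $\geq 1$. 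This positivity of the density over an isolated intersection point is the generalized-Lelong-number content of the density theory of \cite{DS18}; ensuring that none of this mass leaks to neighbouring fibers or to the section at infinity, with a bound independent of $Z_1,Z_2$, is the step I expect to be the main obstacle.

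Granting the claim, positivity of $S$ yields $\|S\|\geq\sum_{i=1}^N\|S\|_{W_i}\geq c_0N$, and combining with $\|S\|\leq c\|T\|\leq 2^{2k}c\deg(Z_1)\deg(Z_2)$ gives $N\leq(2^{2k}c/c_0)\deg(Z_1)\deg(Z_2)$, which is the assertion. In the algebraic category one could alternatively bypass the density theory by moving $Z_1,Z_2$ into general position and invoking a B\'ezout-type estimate.
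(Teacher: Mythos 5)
Your reduction is exactly the one the paper uses (diagonal in $Y\times Y$, $T=[Z_1\times Z_2]$, mass of the limit current $S$ bounded above by Theorem \ref{t:density}(c), then a fiberwise lower bound at each isolated intersection point), but the step you yourself flag as ``the main obstacle'' --- a uniform positive lower bound for the mass of $S$ near $\pi^{-1}(\tilde a_i)$, independent of $Z_1,Z_2$ --- is precisely the point that needs an argument, and your sketch does not supply one: appealing to ``the generalized-Lelong-number content of the density theory'' and to the local intersection multiplicity is circular here, since what must be shown is that a definite amount of density survives the limit $\lambda_n\to\infty$.

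The way to close it is short. Work in the good local model of Example \ref{e:local} centered at $\tilde a_i$, where $\tau$ is the identity and $\tilde a_i$ is the origin, which is fixed by every $A_\lambda$. Then each $(A_{\lambda_n})_*T$ is the current of integration on an analytic set passing through the origin, hence has Lelong number at least $1$ there. By the upper semicontinuity of Lelong numbers with respect to weak convergence in the current variable (Demailly, \cite{Dem93}), the limit $S$ also has Lelong number at least $1$ at $\tilde a_i$, and since the Lelong number is a normalized limit of mass ratios on balls, the mass of $S$ on a fixed-size neighbourhood of $\tilde a_i$ is bounded below by a constant depending only on the local geometry, not on $Z_1,Z_2$. (Isolatedness of $\tilde a_i$ in $(Z_1\times Z_2)\cap\Delta$ is used only to ensure that, over a small $V_0$, $S$ is supported in the single fiber $\pi^{-1}(\tilde a_i)$, so the contributions at distinct $\tilde a_i$ are disjoint; your worry about mass ``leaking'' to nearby fibers or to infinity is beside the point --- leakage could only lose the bulk of the mass, while the Lelong-number argument guarantees that at least one unit of density persists at the origin itself.) With that lemma in place, your summation over the disjoint neighbourhoods and the upper bound $\|S\|\leq c\|T\|$ give the corollary exactly as you wrote.
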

\proof
Consider $X:=Y\times Y$, $V$ the diagonal of $X$ and $Z:=Z_1\times Z_2$. Let $\pi_1$ and $\pi_2$ be the natural projections from $X$ onto its factor $Y$. We will use the K\"ahler form on $X$ defined by $\omega:=\pi_1^*(\omega_Y)+\pi_2^*(\omega_Y)$ for which the degree of $Z$ is equal to ${q_1 + q_2 \choose q_1} \deg(Z_1)\times\deg(Z_2)$, where $q_i$ is the dimension of $Z_i$.
Let $T=[Z]$ be the current of integration on $Z$ and we will use the notation from Theorem \ref{t:density}. 
Let $a$ be an isolated point in $Z_1\cap Z_2$ and $\widetilde a:=(a,a)$ a point of $V$. By Theorem \ref{t:density}(c), the mass of $S$ is bounded by a constant times 
$\deg(Z_1)\times\deg(Z_2)$. Therefore, in order to get the result, it is enough to check that the mass of $S$ on $\pi^{-1}(\widetilde a)$ is bounded from below by a strictly positive constant independent of $Z_1$ and $Z_2$.

We use now the map $\tau$ defined in Example \ref{e:local} for $V_0$ and for a local coordinate system centered at the point $\widetilde a$. In this setting, we have $S=\lim_{n\to\infty} (A_{\lambda_n})_*(T)$. Since $\widetilde a$ is isolated in the intersection $Z\cap V$, we see that in $\pi^{-1}(V_0)$ the current $S$ is supported by $\pi^{-1}(\widetilde a)$. 
Now, since $(A_{\lambda_n})_*(T)$ is given by an analytic set, its Lelong number at $\widetilde a$ is at least equal to 1. By the upper semi-continuity theorem of Lelong numbers in the current variable \cite{Dem93}, $S$ has a Lelong number at $\widetilde a$ at least equal to 1. It follows that its mass is bounded from below by a constant. This ends the proof of the corollary.
\endproof

%%%%%%%%%%%%%%%%%%%%
\section{Upper bound for periodic points and for negative orbits}

In this section, we prove the following uniform bounds for periodic points and for negative orbits on analytic sets which generalizes \cite[Proposition 9.3]{Yap24}. The results can be extended to a more general setting of meromorphic maps or correspondences. 

\begin{proposition} \label{IntersectionBound1}
Let $f:\P^k\to\P^k$ be a holomorphic endomorphism of algebraic degree $d\geq 2$ of $\P^k$. Let $\Per_n$ denote the set of periodic points of period $n$ of $f$.
There is a constant $c>0$ which only depends on $k$ such that if $Z$ is an analytic subset of degree $e$ and of dimension $q$ of $\P^k$  and if $a$ is a point in $\P^k$ then
$$\left| \Per_n\cap Z  \right|\leq c e d^{qn} \qquad \text{and} \qquad \left| f^{-n}(a)\cap Z \right|\leq ced^{qn}$$
for every $n\geq 1$.
\end{proposition}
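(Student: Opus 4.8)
The plan is to realise both $\Per_n\cap Z$ and $f^{-n}(a)\cap Z$ as the underlying sets of intersections of two analytic subsets inside $Y:=\P^k\times\P^k$, and then to invoke Corollary \ref{c:intersection}. Endow $Y$ with the Kähler form $\omega:=\pi_1^*\omegaFS+\pi_2^*\omegaFS$, where $\pi_1,\pi_2\colon Y\to\P^k$ are the two projections and $\omegaFS$ is normalised so that $\int_{\P^1}\omegaFS=1$. Since the graph map $(\id,f^n)\colon\P^k\to Y$ is a closed embedding, the set
$$\Gamma:=(\id,f^n)(Z)=\{(x,f^n(x)):x\in Z\}$$
is an analytic subset of $Y$ of dimension $q$. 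Moreover $x\mapsto(x,x)$ is a bijection of $\Per_n\cap Z$ onto $\Gamma\cap\Delta$, where $\Delta$ is the diagonal of $Y$, and $x\mapsto(x,a)$ is a bijection of $f^{-n}(a)\cap Z$ onto $\Gamma\cap(\P^k\times\{a\})$.

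Next I would check that these two intersections are finite, so that Corollary \ref{c:intersection}, which bounds the number of \emph{isolated} intersection points, bounds their full cardinalities. A holomorphic endomorphism of $\P^k$ of degree at least $1$ is a finite morphism: if $W\subset\P^k$ were a positive-dimensional subvariety with $f^n|_W$ constant (respectively equal to the identity), then pulling the cohomological identity $(f^n)^*\omegaFS\sim d^n\omegaFS$ back to a resolution of $W$ would force $0=d^n\int_W\omegaFS^{\dim W}$ (respectively $\int_W\omegaFS^{\dim W}=d^n\int_W\omegaFS^{\dim W}$), contradicting $d\ge 2$ and $\int_W\omegaFS^{\dim W}>0$. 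Hence $\Per_n=\mathrm{Fix}(f^n)$ and $f^{-n}(a)$ are finite sets. Applying Corollary \ref{c:intersection} inside $(Y,\omega)$ to $\Gamma$ and $\Delta$ bounds $|\Per_n\cap Z|$ by $c\deg(\Gamma)\deg(\Delta)$, and applying it to $\Gamma$ and $\P^k\times\{a\}$ bounds $|f^{-n}(a)\cap Z|$ by $c\deg(\Gamma)\deg(\P^k\times\{a\})$, with $c$ depending only on $(Y,\omega)$, hence only on $k$.

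It then remains to compute the three degrees with respect to $\omega$. One has $\deg(\Delta)=\int_{\P^k}(2\omegaFS)^k=2^k$ and $\deg(\P^k\times\{a\})=\int_{\P^k}\omegaFS^k=1$. For $\Gamma$, parametrising by $Z$ so that $\omega$ pulls back to $\omegaFS|_Z+(f^n)^*\omegaFS|_Z$, and using $(f^n)^*\omegaFS\sim d^n\omegaFS$ together with $\int_Z\omegaFS^q=e$, one finds
$$\deg(\Gamma)=\int_Z\bigl(\omegaFS+(f^n)^*\omegaFS\bigr)^q=\sum_{j=0}^q\binom{q}{j}d^{nj}\int_Z\omegaFS^q=e\,(1+d^n)^q\le 2^k\,e\,d^{qn},$$
since $q\le k$ and $d^n\ge 1$. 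Substituting, $|\Per_n\cap Z|\le 4^k c\,e\,d^{qn}$ and $|f^{-n}(a)\cap Z|\le 2^k c\,e\,d^{qn}$, which is the proposition with a constant depending only on $k$. (If $Z$ has irreducible components of different dimensions, one runs the argument componentwise.)

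The only point needing care is the finiteness step: it is what guarantees that Corollary \ref{c:intersection} accounts for every point of the two intersections, not merely the isolated ones, so that no excess-intersection analysis is needed. After that the argument is bookkeeping, the exponent $qn$ arising because the second Fubini--Study factor pulls back with a coefficient $d^n$ and there are $q$ wedge slots available for these factors. The same strategy, with Corollary \ref{c:intersection} replaced by a direct application of Theorem \ref{t:density}(c) together with upper semicontinuity of Lelong numbers, gives the extension to meromorphic maps and correspondences noted after the statement.
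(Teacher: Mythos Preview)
Your proof is correct and follows essentially the same route as the paper: embed the problem in $\P^k\times\P^k$ via the graph $\Gamma=(\id,f^n)(Z)$, intersect with the diagonal (respectively with $\P^k\times\{a\}$), invoke Corollary~\ref{c:intersection}, and compute $\deg\Gamma=e(1+d^n)^q$ using $(f^n)^*\omegaFS\sim d^n\omegaFS$. The paper simply asserts that the relevant intersections are finite, whereas you supply the cohomological argument for this; otherwise the two proofs coincide.
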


\proof
Define $X:=\P^k\times \P^k$. Let $V$ be the diagonal of $X$ and $V':=\P^k\times \{a\}$. Consider 
$$Z_n:=\{(x,f^n(x)): \, x\in Z\}.$$
We have $\left| \Per_n\cap Z \right| = \left| Z_n\cap V \right|$ and $\left| f^{-n}(a)\cap Z \right| = \left| Z_n\cap V' \right|$. Observe that all these intersections are finite sets. 
By Corollary \ref{c:intersection}, it is enough to show that $\deg Z_n \leq c e d^n$ for some constant $c>0$ which only depends on $k$.

Let $\omega_\FS$ be the standard Fubiny-Study form on $\P^k$ which is in the cohomology class of a projective hyperplane. We consider the K\"ahler form $\omega:=\pi_1^*(\omega_\FS)+ \pi_2^*(\omega_\FS)$ where $\pi_1$ and $\pi_2$ denote the standard projections from $X$ to its factors. 
Then we have 
\begin{eqnarray*}
\deg Z_n &=& \int_{Z_n}\omega^q = \sum_{j=0}^q {q\choose j} \int_{Z_n} \pi_1^*(\omega_\FS)^{q-j} \wedge \pi_2^*(\omega_\FS)^j \\
&=& \sum_{j=0}^q {q\choose j} \int_Z \omega_\FS^{q-j} \wedge (f^n)^*(\omega_\FS)^j.
\end{eqnarray*}
The last integral only depends on $Z$ and on the cohomology class of the form $(f^n)^*(\omega_\FS)^j$. It is known that this form is cohomologous to $d^{nj}$ times the cohomology class of $\omega_\FS^j$. Therefore, we get
$$\deg Z_n = \sum_{j=0}^q {q\choose j} \int_Z \omega_\FS^{q-j} \wedge d^{nj} \omega_\FS^j = \sum_{j=0}^q {q\choose j} d^{nj} \deg Z = e (d^n+1)^q.$$
The proposition follows. 
\endproof

\begin{remark} \rm
We can consider an analytic set $A$ instead of the point $a$. We get a similar result for the number of isolated points in $f^{-n}(A)\cap Z$.  
\end{remark}

\section{Proofs of main theorems}
We mainly follow the arguments in \cite{Yap24}. We first recall \cite[Theorem 8.1]{Yap24}. 

\begin{theorem} \cite[Theorem 8.1]{Yap24} \label{QuantEquidistribution1}
Let $X$ be a smooth variety defined over a number field $K$ and let $f: X \to X$ be a polarized endomorphism of degree $d \geq 2$, i.e. there exists an ample line bundle $L$ such that $f^*L \simeq L^d$. There exists constants $c_1 = c_1(K,f), c_2 = c_2(f), c_3 = c_3(K,f)$ such that for any $\eps > 0$ and archimedean place $v$ of $K$, if $\vphi: X(\ovl{K}_v) \to \bb{R}$ is a smooth function, then 

\begin{enumerate}
\item there exists a hypersurface $H(\vphi,\eps)$ defined over $K$ with degree at most $c_1 \eps^{-c_2}$ with respect to $L$,

\item for all $x \in X(\ovl{K})$ with Galois orbit $F_x$, if $x \not \in H(f,\eps)$ then 
$$\left|\frac{1}{|F_x|} \sum_{y \in F_x} \vphi(y) - \int \vphi(y) d \mu_{f,v}(y) \right| \leq \|\vphi\|_{\cal{C}^3} \left(\frac{\h_f(x)}{\eps} + c_3 \eps \right).$$
\end{enumerate}
\end{theorem}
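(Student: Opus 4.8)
\smallskip
\noindent\textbf{Proof strategy.} The plan is to derive the estimate from arithmetic intersection theory on $X$ by perturbing the canonically metrized polarization and invoking an effective form of Zhang's essential--minimum inequality. Dividing $\vphi$ by $\|\vphi\|_{\Cc^3}$ we may assume $\|\vphi\|_{\Cc^3}\le1$; write $n=\dim X$, $D=\deg_L X$. Equip $L$ with its $f$--invariant adelic metric, so that $f^*\ovl L\simeq d\,\ovl L$ isometrically, $\h_{\ovl L}=\h_f$, the arithmetic self--intersection $c_1(\ovl L)^{n+1}=(n+1)D\,\h_f(X)$ vanishes (the canonical metric makes the height of $X$ itself zero), and $\mu_{f,v}=c_1(\ovl L_v)^n/D$. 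For $t\in\R$, $|t|\le\tfrac12$, let $\ovl L_t=\ovl L+t\,\overline{\Oc(\vphi)}_v$ be $L$ with its metric at $v$ multiplied by $e^{-t\vphi}$ and left unchanged at all other places. Since $\overline{\Oc(\vphi)}_v$ is the trivial bundle away from $v$, additivity of heights yields, in the normalization of Theorem~\ref{QuantEquidistribution1},
\[
\h_{\ovl L_t}(x)=\h_f(x)+t\cdot\frac1{|F_x|}\sum_{y\in F_x}\vphi(y)
\]
for every closed point $x$ with Galois orbit $F_x$.

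Next I would expand $F(t):=c_1(\ovl L_t)^{n+1}$. By multilinearity of the arithmetic intersection pairing it is a polynomial in $t$ of degree $\le n+1$ with $F(0)=0$, linear term $F'(0)=(n+1)\,c_1(\ovl L)^n\!\cdot c_1(\overline{\Oc(\vphi)}_v)=(n+1)\!\int_{X(\C_v)}\!\vphi\,c_1(\ovl L_v)^n=(n+1)D\!\int\!\vphi\,d\mu_{f,v}$, and coefficients of $t^j$ ($2\le j\le n+1$) equal to $\pm\int_{X(\C_v)}\vphi\,(\ddc\vphi)^{j-1}\wedge c_1(\ovl L_v)^{n+1-j}$. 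Here $c_1(\ovl L_v)$ is only the positive closed equilibrium current, but the mixed Monge--Amp\`ere products are defined by Bedford--Taylor theory, and an integration by parts plus the Chern--Levine--Nirenberg inequality bounds each such coefficient by $C_0(n)\,\|\vphi\|_{\Cc^2}^{\,j}D\le C_0(n)D$. Hence, for $|t|\le\tfrac12$,
\[
\Bigl|\,F(t)-(n+1)\,t\,D\!\int\!\vphi\,d\mu_{f,v}\,\Bigr|\ \le\ C_0\,t^2\,D .
\]

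The crucial arithmetic input is an \emph{effective} essential--minimum inequality: for every $\delta>0$ there is a hypersurface $H_\delta\subset X$ over $K$ with $\deg_L H_\delta\le c_1'\delta^{-c_2'}$ (constants depending only on $K$ and $f$) such that $\h_{\ovl L_t}(x)\ge\frac1{(n+1)D}c_1(\ovl L_t)^{n+1}-\delta$ for every closed $x\notin H_\delta$ and every $|t|\le\tfrac12$. The uniformity in $t$ (and $\vphi$) holds because the $\ovl L_t$ form a bounded family with \emph{fixed} underlying bundle $L$ --- this, rather than the $\Cc^2$--estimate above, is where a fixed amount of smoothness such as $\Cc^3$ is used, in the error term of the effective Hilbert--Samuel theorem --- and since changing the metric by $O(\delta)$ moves the low--height subvarieties negligibly, $H_\delta$ may be taken to depend only on $f$ and $\delta$, which is why the exceptional set in the statement is $H(f,\eps)$. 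I would establish this input from an effective arithmetic Hilbert--Samuel theorem: the number of sections of $L^{\otimes m}$ of sup--norm $<1$ at all archimedean places equals $\frac1{(n+1)!}c_1(\ovl L_t)^{n+1}m^{n+1}$ up to an effective lower--order error, which forces, whenever $\h_{\ovl L_t}(x)$ falls below the displayed value, a nonzero such section of $L^{\otimes m}$ with $m\lesssim\delta^{-1}$ vanishing at $x$; collecting these divisors, together with the Zariski non--density of the low--height locus, into one hypersurface of controlled degree gives $H_\delta$. I expect this to be the main obstacle and essentially the only ingredient beyond the classical Szpiro--Ullmo--Zhang theorem; tools of the type of Corollary~\ref{c:intersection} and Proposition~\ref{IntersectionBound1} are exactly what one uses to bound the degree of such a locus.

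Finally, combining the three inputs: for $x\notin H_\delta$,
\[
t\cdot\frac1{|F_x|}\sum_{y\in F_x}\vphi(y)=\h_{\ovl L_t}(x)-\h_f(x)\ \ge\ \frac{F(t)}{(n+1)D}-\delta-\h_f(x)\ \ge\ t\!\int\!\vphi\,d\mu_{f,v}-\frac{C_0}{n+1}t^2-\delta-\h_f(x).
\]
Taking $t=\eps$ and $\delta=\eps^2$ and dividing by $\eps>0$ gives $\frac1{|F_x|}\sum_{y\in F_x}\vphi(y)\ge\int\vphi\,d\mu_{f,v}-c_3\eps-\h_f(x)/\eps$ with $c_3=\frac{C_0}{n+1}+1$; taking $t=-\eps$ gives the matching upper bound. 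With $H=H_{\eps^2}$ (the hypersurface of the statement, of degree $\le c_1'\eps^{-2c_2'}$) and multiplying back by $\|\vphi\|_{\Cc^3}$, this is exactly the assertion, with $c_1=c_1'$, $c_2=2c_2'$. One routine point is suppressed: the arithmetic input needs $\ovl L_t$ semipositive, which for $|t|\le\tfrac12$ and $\|\vphi\|_{\Cc^2}\le1$ can be arranged by writing $\ovl L_t$ as a difference of two semipositive adelic bundles with fixed underlying line bundles, or by twisting $\ovl L$ first by a small ample class and removing the twist at the end.
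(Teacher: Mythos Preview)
The paper does not give a proof of this statement: Theorem~\ref{QuantEquidistribution1} is simply quoted from \cite[Theorem~8.1]{Yap24} and used as a black box. So there is no ``paper's own proof'' to compare against here.

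That said, your outline is the expected one --- a quantitative version of the Szpiro--Ullmo--Zhang/Yuan variational method --- and is almost certainly the shape of the argument in \cite{Yap24}. The normalization, the twist $\ovl L_t=\ovl L+t\,\overline{\Oc(\vphi)}_v$, the polynomial expansion of $c_1(\ovl L_t)^{n+1}$, and the Chern--Levine--Nirenberg-type bound on the higher-order terms are all standard and correct (for polarized endomorphisms the Green current has H\"older potentials, so the mixed Monge--Amp\`ere masses are indeed controlled).

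The genuine gap is the step you yourself flag: the \emph{effective} essential-minimum inequality producing a hypersurface $H_\delta$ of degree $\le c_1'\delta^{-c_2'}$ outside of which $\h_{\ovl L_t}(x)\ge \frac{1}{(n+1)D}c_1(\ovl L_t)^{n+1}-\delta$. Your sketch (``an effective arithmetic Hilbert--Samuel theorem forces a small section of $L^{\otimes m}$, $m\lesssim\delta^{-1}$, vanishing at any low-height point'') is the right heuristic, but turning it into an actual bound of the stated polynomial shape, \emph{uniform over the family $\{\ovl L_t:|t|\le\tfrac12,\ \|\vphi\|_{\Cc^3}\le1\}$}, is the whole content of the theorem and is not something you have established. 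Two specific points: (i) your claim that $H_\delta$ can be taken independent of $\vphi$ because ``changing the metric by $O(\delta)$ moves the low-height subvarieties negligibly'' is not justified --- the metric perturbation is $O(1)$, not $O(\delta)$; in the statement the hypersurface is written $H(\vphi,\eps)$ and the ``$H(f,\eps)$'' in item~(2) is presumably a typo; (ii) Corollary~\ref{c:intersection} and Proposition~\ref{IntersectionBound1} of the present paper are \emph{not} the tools used to produce $H_\delta$ --- they bound the number of periodic or preimage points lying on a given hypersurface, which is a different task altogether, and they play no role in the proof of Theorem~\ref{QuantEquidistribution1}.
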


Here, the quantity $\h_f$ is the canonical height with respect to $f$, which was first defined in this setting by Call--Silverman \cite{CS93}. The canonical height satisfies $\h_f(f(x)) = d \h_f(x)$ and that $\h_f(x) = 0$ if and only if $x$ is a preperiodic point. First, by \cite{Fak03}, after passing to a suitable power of $L$ we may use $L$ to embed $X$ into $\bb{P}^N$ with an endomorphism $F: \bb{P}^N \to \bb{P}^N$ that extends $f$ on $X$. 

\begin{proof}[Proof of Theorem \ref{IntroQuantitative1}]
Let $\vphi$ be our given test function and let $\dim X = k$. By Theorem \ref{QuantEquidistribution1}, there exists a hypersurface $H(\vphi,\eps)$ such that if $x$ is a periodic point, so that $\h_f(x) = 0$ and $x \not \in H(\vphi,\eps)$, then 
$$\left|\frac{1}{|F_x|} \sum_{y \in F_x} \vphi(y) - \int \vphi(y) d \mu_{f,v}(y) \right| \leq \|\vphi\|_{\cal{C}^3} (c_3 \eps).$$
Furthermore, we may bound the degree of $H(\vphi,\eps)$ by $c_1 \eps^{-c_2}$. By \cite{DZ23}, there is a constant $\alpha > 0$ depending on $f$ such that $|\Per_n| \geq \alpha d^{kn}$. Applying Proposition \ref{IntersectionBound1}, we obtain $|\Per_n \cap H(\vphi,\eps)| \leq c (c_1 \eps^{-c_2}) d^{(k-1)n}$ as $\dim H(\vphi,\eps) = k-1$. Now pick a $\lambda > 1$ such that $d (\lambda)^{-c_2} > \lambda.$ Then taking $\eps = \lambda^{-n}$, we obtain that
$$|\Per_n \cap H(\vphi,\lambda^n)| \leq c c_1 d^{kn} (\lambda)^{-n}$$
and in particular, at least $1 - \lambda^{-n}$ proportion of points of $\Per_n$ do not lie on $H(\vphi, \lambda^{-n})$. If $G_x$ denotes the subset of $\Per_n$ that does not lie on $H(\vphi,\lambda^{-n})$, then $G_x$ is $\Gal(\ovl{K}/K)$-stable and we have  
$$\left|\frac{1}{|G_x|} \sum_{y \in G_x} \vphi(y) - \int \vphi(y) d \mu_{f,v} \right| \leq \|\vphi\|_{\cal{C}^3} c_3 (\lambda)^{-n}.$$
For $y \in \Per_n \setminus G_x$, we may bound $|\vphi(y)|$ by $\|\vphi\|_{\cal{C}^3}$ and since $\frac{|G_x|}{|\Per_n|} \geq 1 - \lambda^{-n}$ for sufficiently large $n$, we obtain 
$$\left|\frac{1}{|\Per_n|} \sum_{y \in \Per_n} \vphi(y) - \int \vphi(y) d \mu_{f,v} \right| \leq C \|\vphi\|_{\cal{C}^3} \lambda^{-n}$$
for some $C > 0$ depending only on $f$ as desired.
\end{proof}

To prove Theorems \ref{IntroGaloisTheorem1} and \ref{IntroGaloisTheorem2}, we first recall the following proposition. We first fix an open set $U$ of $X$ that is isomorphic to $\bb{D}_2$, the open disc of radius $2$, such that $\mu_{f,v}(\bb{D}_1) = \delta > 0$. If $R$ is a cube in $\bb{C}^k \simeq \bb{R}^{2k}$, we let $nR$ denote the cube with the same center as $R$ but with sides having $n$ times the length. 

\begin{proposition} \cite[Proposition 10.2]{Yap24}\label{Measure1}
Fix a positive integer $n$. There exist constants $c, c' > 0$ depending on $f$ and $n$ such that for any positive integer $D \geq 2$,  there are $D$ cubes $\{R_i\}$, each of length $c'D^{-2 \kappa}$ where $\kappa > 0$ depends on $f$, such that $\mu_{f,v}(R_i) \geq c D^{-4k \kappa}$ and the cubes $\{nR_i\}$ are all pairwise disjoint. 
\end{proposition}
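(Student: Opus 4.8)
The plan is to prove Proposition~\ref{Measure1} by combining one soft regularity property of the equilibrium measure with a pigeonhole count on a fine grid and a colouring step that separates the dilated cubes. The regularity property is that $\mu_{f,v}$ is moderate; concretely, I would use that there are constants $\beta = \beta(f) > 0$ and $A = A(f) > 0$ with
$$\mu_{f,v}(B(x,r)) \le A\, r^{\beta} \qquad \text{for every ball } B(x,r) \subset U.$$
This follows from the H\"older continuity of the local potentials of $\mu_{f,v}$, or directly by testing moderateness against the bounded family of quasi-psh functions $z \mapsto \log \|z - x\|$. I then \emph{define} $\kappa$ by requiring $2\kappa\beta \ge 1$, say $\kappa := 1/(2\beta)$; this depends only on $f$, and the count below will show that this inequality is exactly what makes the conclusion hold uniformly in $D$. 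I would also fix at the outset a closed cube $Q_0 \subset \mathbb{D}_1$ with $\mu_{f,v}(Q_0) =: \delta' > 0$ (such a cube exists since $\mathbb{D}_1$ is a countable union of cubes with closures in $\mathbb{D}_1$ and $\mu_{f,v}(\mathbb{D}_1) = \delta > 0$), of side length $s$ and at positive distance from $\partial \mathbb{D}_1$.

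Given $D \ge 2$, set $\ell := c' D^{-2\kappa}$, where $c' > 0$ is a small constant, depending only on $f$ and $n$, to be pinned down at the end --- small enough that $\ell < c'$ is much smaller than $s$, than $\dist(Q_0, \partial \mathbb{D}_1)$, and than $1/n$, so that every cube and every $n$-fold dilate below stays inside $U$. Tile $\mathbb{R}^{2k}$ by the half-open cubes of side $\ell$ on the lattice $\ell \mathbb{Z}^{2k}$, and let $\mathcal{Q}$ be the family of tiles meeting $Q_0$; these tiles are pairwise disjoint, their union contains $Q_0$, and $N := |\mathcal{Q}| \asymp (s/\ell)^{2k} \asymp (c')^{-2k} D^{4k\kappa}$, with implied constants depending only on the fixed data. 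Call $Q \in \mathcal{Q}$ \emph{heavy} if $\mu_{f,v}(Q) \ge \theta$, with $\theta := c\, D^{-4k\kappa}$. The exponent $4k\kappa$ is chosen precisely so that $N\theta \asymp c(c')^{-2k}$ is independent of $D$; taking $c$ to be a small enough multiple of $\delta'(c')^{2k}$ makes the total mass of the non-heavy tiles at most $\delta'/2$, so the heavy tiles carry $\mu_{f,v}$-mass at least $\delta'/2$. On the other hand each tile lies in a ball of radius $O(\ell)$, so $\mu_{f,v}(Q) \le A'(c')^{\beta} D^{-2\kappa\beta} \le A'(c')^{\beta} D^{-1}$ by the regularity bound and the choice of $\kappa$; dividing, there are at least $\tfrac{\delta'}{2A'(c')^{\beta}} D$ heavy tiles.

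Now shrink $c'$ once more so that $\tfrac{\delta'}{2A'(c')^{\beta}} \ge (n+1)^{2k}$, giving at least $(n+1)^{2k} D$ heavy tiles. Colour each point of $\ell \mathbb{Z}^{2k}$ by its residue modulo $n+1$ in every coordinate; one of the $(n+1)^{2k}$ colour classes contains at least $D$ heavy tiles, and I take $D$ of them as $R_1, \dots, R_D$. For $i \ne j$ the lattice indices of $R_i$ and $R_j$ differ by a nonzero element of $(n+1)\mathbb{Z}^{2k}$, so their centres are at $\|\cdot\|_\infty$-distance $\ge (n+1)\ell$; since $nR_i$ is the cube of side $n\ell$ concentric with $R_i$, the cubes $nR_1, \dots, nR_D$ are pairwise disjoint (indeed at mutual distance $\ge \ell$). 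Each $R_i$ then has side $c' D^{-2\kappa}$ and $\mu_{f,v}(R_i) \ge c\, D^{-4k\kappa}$, which is the assertion; and the constants were fixed in the order $\kappa \rightsquigarrow (\delta', s, A, A') \rightsquigarrow c' \rightsquigarrow c$, so $c$, $c'$ and $\kappa$ depend only on $f$ and $n$.

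The only real difficulty I anticipate is the bookkeeping that enforces uniformity in $D \ge 2$: it is this that dictates the exact exponents $-2\kappa$ on the side length and $-4k\kappa$ on the mass --- so that the mass wasted on light cubes is $D$-independent --- and that forces $\kappa$ to be taken large relative to $\beta$ rather than equal to it. The moderateness of $\mu_{f,v}$ and the colouring argument are both standard, so the content lies in calibrating these exponents and the order in which the constants are chosen.
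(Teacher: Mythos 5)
Your proof is correct, and it rests on the same two ingredients as the paper's argument: the moderateness/H\"older bound $\mu_{f,v}(R)\le C\,(\text{side})^{\beta}$ for small cubes in the chart, and a pigeonhole count over a grid of cubes of side $c'D^{-2\kappa}$ with mass threshold $cD^{-4k\kappa}$. Where you diverge is in the finishing combinatorial step and in the normalization of $\kappa$. The paper extracts the cubes greedily: it picks the heaviest cube $R_1$, discards the cubes meeting $nR_1$ (whose total mass is $\lesssim \delta D^{-2}$ by the regularity bound, since it takes $\kappa=1/\beta$ so each $n$-dilate has mass $O(D^{-2})$), and iterates $D$ times, losing at most $\delta D^{-1}\le\delta/2$ of the mass in total. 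You instead take $\kappa=1/(2\beta)$, so each small cube has mass $O(D^{-1})$, count that at least $(n+1)^{2k}D$ cubes must be heavy, and then use the residue-class colouring of the lattice to select $D$ heavy cubes whose centres are $\ge(n+1)\ell$ apart, which forces the $n$-dilates to be disjoint. Both routes give the stated exponents (the value of $\kappa$ is only required to depend on $f$, and the proposition is applied later with whatever $\kappa$ one fixes), and your ordering of the constants $\kappa\rightsquigarrow c'\rightsquigarrow c$ is sound. One small advantage of your version: the separation is enforced explicitly by the colouring, whereas the paper's written greedy step only excludes cubes meeting $nR$, which as stated does not quite guarantee that the dilate of a later cube avoids $nR$ (one should exclude a slightly larger zone, e.g. cubes meeting $(2n-1)R$); this is a harmless constant adjustment there, but your argument sidesteps the issue entirely. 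The price you pay is the extra factor $(n+1)^{2k}$ absorbed into $c'$, which is immaterial since $n$ is fixed.
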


\begin{proof}
This is Proposition 10.2 of \cite{Yap24} except there are some calculation mistakes and typos there and we will rewrite the proof here. We choose $C, \kappa > 0$ such that for any cube $R$ of length $\eps$ in our chart, we have 
$$\mu_{f,v}(R) \leq C \eps^{1/\kappa}.$$
Now partition $\bb{D}_2$ into cubes of length $c' D^{- 2 \kappa}$. Then we have $(2 c' D^{- 2 \kappa})^{-2k}$ many such cubes and hence among the cubes covering $\bb{D}_1$, there must be one with measure at least $\delta (2c' D^{-2\kappa})^{2k}$. 
\par 
If $R$ is this cube, then $nR$ is a cube of length $n c' D^{-2 \kappa}$ and so $\mu_{\vphi,v}(nR) \leq n (c')^{1/\kappa} C D^{-2}$. We choose $c'$ so that $n(c')^{1/\kappa} C \leq \delta$. Then the remaining cubes which do not intersect $nR$ will have measure at least $\delta (1- D^{-2})$ left. This allows us to choose a cube $R_2$ with measure at least $\delta (1-D^{-2})(2c' D^{-2\kappa})^{2k}$. The remaining cubes will then have measure at least $\delta (1 - 2D^{-2})$. Repeating this, we obtain $D$ cubes $R_1,\ldots,R_D$ with $nR_i$ all disjoint, such that 
$$\mu_{\vphi,v}(R_i) \geq \delta(1-D^{-1})(2c')^{2k}  D^{-4 \kappa k}.$$
Since $D \geq 2$, this gives us $c = \frac{1}{2} \delta (2c')^{2k}$ as desired.
\end{proof}

\begin{proof}[Proof of Theorem \ref{IntroGaloisTheorem1}]
We let $\lambda > 1$ be a constant that we will choose later and we set $D = \lambda^n$ and $n = 2$. Then we may find $D$ many cubes $R_i$ with $\mu(R_i) \geq c \lambda^{- 4k \kappa n }$ such that $2R_i$ are all pairwise disjoint. The length of each $_Ri$ is $O_{\vphi}(D^{-2 \kappa})$. We let $\vphi_i$ be a non-negative function that is $1$ on $R_i$ and $0$ on $2R_i$. Such a function $\vphi_i$ may be chosen so that 
$$\|\vphi_i\|_{\cal{C}^3} \leq O_{f}(\lambda^{6k \kappa n})$$
and applying Theorem \ref{QuantEquidistribution1}, we obtain a hypersurface $H(\vphi_i,\eps)$ such that if $x \in \Per_n$ with $x \not \in H(\vphi_i,\eps)$, we have 
$$\left|\frac{1}{|F_x|} \sum_{y \in F_x} \vphi_i(y) - \int \vphi_i(y) d \mu_{f,v}(y) \right| \leq O_{f}(\lambda^{6k \kappa n}) (c_3 \eps).$$
As $\vphi_i = 1$ on $R_i$ and $\mu(R_i) \geq c \lambda^{-4k \kappa n}$, it follows that 
$$\int \vphi_i(y) d \mu_{f,v}(y) \geq c \lambda^{-4k \kappa n}$$
and so taking $\eps = c' \lambda^{-10 k \kappa n}$ for some $c'$ sufficiently small depending on $f$, we must have $\sum_{y \in F_x} \vphi_i(y) > 0$ and so $F_x \cap 2R_i \not = \emptyset$. Since $\{2R_i\}$ are pairwise disjoint, if $x \in \Per_n$ and $x \not \in H(\vphi_i,\eps)$ for any $i$, it follows that $|F_x| \geq D$ and thus  $|F_x| \geq \lambda^n$. 
\par 
We may now bound the degree of each $H(\vphi_i,\eps)$ by $c_1 O_{f}(\lambda^{10 k \kappa n})^{c_2}$ and so taking the union over all $i$'s, we have a total degree of $ O_{f}(\lambda^{10 c_2 \kappa n})$. We now take $\lambda > 1$ so that 
$$O_{\vphi}(\lambda^{10 c_2 \kappa n}) < d^{n} \lambda^{-n}$$
and so by Proposition \ref{IntersectionBound1}, we have 
$$\left|\Per_n \cap \left(\bigcup_{i=1}^{D} H(\vphi_i,\eps) \right) \right| \leq cd^{kn} \lambda^{-n}$$
for some constant $c> 0$ depending only on $f$. Since $|\Per_n| \geq \alpha d^{kn}$, after reducing $\lambda$ we get that at least $1 - \lambda^{-n}$ proportion of points of $\Per_n$ do not lie on any of $H(\vphi_i,\eps)$ and for each of these points $x$, we must have $|F_x| \geq \lambda^n$ as desired.
\end{proof}

For the corresponding result of negative orbits, we have to show that the size of $|f^{-n}(\alpha)|$ grows larger than $d^{(k-1)n}$. Recall from \cite{Din09} that if $\kappa_n(x)$ denotes the multiplicity of $f^n$ at $x$, then $\{\kappa_n\}_{n \geq 1}$ forms an analytic multiplicative cocycle. We can extend this to negative integers by setting 
$$\kappa_{-n}(x) = \max_{y \in f^{-n}(x)} \kappa_n(y).$$
By \cite[Theorem 1.2]{Din09}, the function $\kappa_{-n}^{1/n}$ converges pointwise to a function $\kappa_{-}$. Furthermore, for every $\delta > 1$, the level set $\{\kappa_{-} \geq \delta\}$ is a proper analytic subset of $X$ which is invariant under $f$. 
\par 
Using the discussion after \cite[Theorem 3.1]{Din09}, we know that there is a proper analytic subset $\cal{E}$ of $X$ that is totally invariant by $f$, i.e. $f^{-1}(\cal{E}) = f(\cal{E}) = \cal{E}$, and that $\cal{E}$ is maximal. We are now ready to prove Theorem \ref{IntroGaloisTheorem2}.

\begin{proof}[Proof of Theorem \ref{IntroGaloisTheorem2}]
Given a proper analytic subset $Y$ of $X$ and $a \in X \setminus \cal{E}$, by maximality of $\cal{E}$ there exists $m$ such that there exists $b \in f^{-m}(a)$ with $b \not \in Y$. Fix $\lambda_0 > 1$ which is $< \lambda$ for Theorem \ref{IntroQuantitative1} and a $\delta > 0$ so that $\lambda - \delta > 1$ still. We set $\lambda_1 = \lambda - \delta$. We apply this to 
$$Y = E_{\lambda_1} = \{\kappa_{-} \geq \lambda_1 \}$$
Then by definition, the multiplicity of $f^n$ at $c$ for which $f^n(c) = b$ is at most $\lambda_1^n$ for $n$ sufficiently large and hence the number of distinct elements of $f^{-n}(b) \subseteq f^{-n-m}(a)$ is at least $d^{kn} \lambda_1^{-n}$ for $n$ sufficiently large. The rest of the argument is then similar to the proof of Theorem \ref{IntroGaloisTheorem1}. 
\end{proof}

%%%%%%%%%%%%%%%%%%%%%%%%%%%%
% \begin{thebibliography}{XX}

% \bibitem{Demailly}
% Demailly, J.-P. {\it Complex analytic and differential geometry},  
%  manuscript Institut Fourier, first edition 1991,  available online at\\ 
% {\tt http://www.fourier.ujf-grenoble.fr/$\sim$demailly/books.html}.

% \bibitem{DinhSibony18}
%  Dinh, Tien-Cuong; Sibony, Nessim: Density of positive closed currents, a theory of non-generic intersections.  {\it  J. Algebraic Geom.}  {\bf 27} (2018), 497--551.

% \end{thebibliography}

\printbibliography

\end{document}